\newtheorem{theorem}{Theorem}
\newtheorem{lemma}{Lemma}
\newtheorem{remark}{Remark}
\newtheorem{corollary}{Corollary}
\newtheorem{example}{Example}
\begin{document}
	
	{\fontsize{12pt}{17pt}\selectfont
		\title{Normalized Rank- and Determinant-Preserving Mappings of Locally Matrix Algebras}

		\author{Oksana Bezushchak}
		\address{Oksana Bezushchak: Faculty of Mechanics and Mathematics, Taras Shevchenko National University of Kyiv, Volodymyrska, 60, Kyiv 01033, Ukraine}
		\email{bezushchak@knu.ua}
		
		\keywords{Cliﬀord algebra, determinant-preserving mapping, (infinite) tensor product, Jordan homomorphism, locally matrix algebra, preserver, rank-preserving mapping}

		\subjclass[2020]{15A15, 15A86, 47B49 (Primary) 15A30, 15A66, 16W10 (Secondary)}

		\begin{abstract} Let $A$ be a unital locally matrix algebra. Among the examples of such algebras are: (1) an infinite tensor product  $\otimes M_{n_i}(\mathbb{F})$ of matrix algebras over a field $\mathbb{F}$, and (2)~the Clifford algebra of a nondegenerate quadratic form on an infinite-dimensional vector space over an algebraically closed field of characteristic different from~$2$.

We describe linear mappings $A \to B$ between  unital locally matrix algebras that preserve the normalized rank. When $\mathbb{F}$ is a field of real or complex numbers,  we also describe linear mappings  $A \to A$ that preserve the normalized determinant. 	\end{abstract}

				\maketitle
		
		\section{Introduction}

	Let $\mathbb{F}$ be a field and let $M_n(\mathbb{F})$ be the algebra of $n \times n$ matrices over $\mathbb{F}$.  
	For a matrix $a \in M_n(\mathbb{F})$, let $r(a)$ and $\det(a)$ denote the rank and the determinant of $a$, respectively.  
	Let $a^t$ be the transpose matrix of $a$.

	A linear mapping $\varphi \colon M_n(\mathbb{F}) \to M_n(\mathbb{F})$ \textbf{preserves determinant} (respectively \textbf{rank}) if  
	\[
\det(a) = \det(\varphi(a)) \quad \text{( respectively } 	r(a) = r(\varphi(a))  \text{ )}
	\]
	for an arbitrary matrix $a \in M_n(\mathbb{F})$.

	G.~Frobenius~\cite{Frobenius} proved that every determinant-preserving linear mapping is of the type  
	\[
	\varphi(a) = X a Y, \quad a \in M_n(\mathbb{F}), \quad \text{or} \quad 	\varphi(a) = X a^t Y, \quad a \in M_n(\mathbb{F}),
	\]
	where $X, Y$ are invertible $n \times n$ matrices and $	\det(X Y) = 1.$
	
 The result of 	G.~Frobenius was followed by deep generalizations in various directions; see, for example, \cite{Dieudonne,Dolinar_Semrl,Huang_et}.
 
 L.K.~Hua \cite{Hua}, H.G.~Jacob~\cite{Jacob}, and M.~Marcus and B.N.~Moyls~\cite{Marcus_Moyls} proved that every rank-preserving linear mapping $	\varphi:M_n(\mathbb{F}) \to M_n(\mathbb{F})$ is of the type 	\[
 \varphi(a) = X a Y, \quad a \in M_n(\mathbb{F}), \quad \text{or} \quad 	\varphi(a) = X a^t Y, \quad a \in M_n(\mathbb{F}),
 \]
 where $X, Y$ are invertible $n \times n$ matrices.
 
 M.~Bre\v{s}ar and P.~\v{S}emrl~\cite{Bresar_0}, M.~Omladi\v{c} and P.~\v{S}emrl \cite{OmladicSemrl1993RankOne}, and J.~Huang, K.~Kudaybergenov, and F.~Sukochev \cite{Huang_Kudaybergenov_Sukochev}	extended these results to the infinite-dimensional setting of operator algebras.

	Let $A, B$ be associative $\mathbb{F}$-algebras.  
	A linear mapping $\varphi \colon A \to B$ is called a \textbf{Jordan homomorphism} if	$\varphi(a^2) = \varphi(a)^2$	for an arbitrary element $a \in A$. For more information and recent results on Jordan homomorphisms, see~\cite{Bresar_Zelmanov}.

An essential part of the theorems above says that a linear determinant- (respectively, rank-) preserving mapping $\varphi \colon M_n(\mathbb{F}) \to M_n(\mathbb{F})$
	satisfying 	 $\varphi(1) = 1,$ is a Jordan homomorphism.

		In this paper, we extend the descriptions of rank- (respectively, determinant-) preserving mappings to  locally matrix  algebras.

		Recall that an associative $\mathbb{F}$-algebra $A$ is called a \textbf{locally matrix algebra} if every finite subset of $A$ is contained in a subalgebra that is isomorphic to a matrix algebra $M_n(\mathbb{F})$ for some~$n$.  
		An algebra is \textbf{unital} if it contains an identity element.

Let $ \mathbb{N} $ be the set of positive integers.	 A   \textbf{Steinitz number} \cite{ST}   is an infinite formal
	product of the form
\begin{equation}\label{Eq__1} \prod_{p\in \mathbb{P}} p^{r_p}, \end{equation}
	where $ \mathbb{P} $ is the set of all primes, $ r_p \in  \mathbb{N} \cup \{0,\infty\} $ for all $p\in \mathbb{P}$.
	We can define the product of two Steinitz numbers by the rule:
	$$ \prod_{p\in \mathbb{P}} p^{r_p} \cdot  \prod_{p\in \mathbb{P}} p^{k_p}= \prod_{p\in \mathbb{P}} p^{r_p+k_p}, \quad r_p, k_p \in  \mathbb{N} \cup \{0,\infty\},  $$
	where we assume, that
	$$r_p+k_p=\begin{cases}
		r_p+k_p, & \text{if  $r_p < \infty$ and $k_p < \infty$, } \\
		\infty, & \text{in other cases}
	\end{cases}.$$

		For a unital locally matrix algebra $A$ with an identity element $1_A$, consider the set
		\[
		D(A) = \{\, n \ge 1 \mid \text{there exists a subalgebra } A' \text{ such that } 1_A \in A' \subset A, \; A' \cong M_n(\mathbb{F}) \,\}.
		\]  If the $\mathbb{F}$-algebra $A$ is infinite-dimensional, then the set $D(A)$ is infinite. Therefore, the least common multiple of all integers in $D(A)$ is an infinite product of prime numbers, that is, a Steinitz number. We call this least common multiple the \textbf{Steinitz number} of the algebra $A$ and denote it by $\mathbf{st}(A)$  (see~\cite{BezOl,Glimm}).

			J.~Glimm~\cite{Glimm} proved that two unital countable-dimensional locally matrix algebras $A$ and $B$ are isomorphic if and only if $\mathbf{st}(A) =\mathbf{st}(B)$.  
			For algebras of uncountable dimension, this is no longer true; see~\cite{BezOl}.
	
	Consider some examples of unital locally matrix algebras.
			
\begin{example}\label{Ex_1}
	Let $A_i = M_{n_i}(\mathbb{F})$, $i \in I$, be a family of matrix algebras.
	Then the (infinite) tensor product
	\[
	A = \bigotimes_{i \in I} A_i
	\]
	is a unital locally matrix algebra, and
	\[
\mathbf{st}(A) = \prod_{i \in I} n_i.
	\]  \end{example}
	
\begin{example}\label{Ex_2}
	Let $V$ be an infinite-dimensional vector space over an algebraically closed field $\mathbb{F}$, 
	$\operatorname{char} \mathbb{F} \ne 2$.
	Let $f \colon V \to \mathbb{F}$ be a nondegenerate quadratic form.
	Then the Clifford algebra $	\operatorname{Cl}(V, f)$	is a unital locally matrix algebra, and $	\mathbf{st}(\operatorname{Cl}(V, f)) = 2^{\infty}.$ \end{example}
	
 G. K\"{o}the \cite{Koethe}  showed that every countable-dimensional unital locally matrix algebra is isomorphic to an infinite tensor product as in Example~\ref{Ex_1}. These algebras are dense in uniformly hyperfinite $ \mathbb{C}^{*}$-algebras~\cite{Glimm} and have significant applications in physics; see~\cite{Bratteli-Robinson}.

Moreover, a countable-dimensional unital locally matrix algebra of Steinitz number $2^{\infty}$ over an algebraically closed field of characteristic $\neq 2$ can be realized as the Clifford algebra of Example~\ref{Ex_2}.

			Let $A$ be a unital locally matrix algebra.  
			For an element $a \in A$, choose a subalgebra $A' \subset A$ containing  $1_A$ and $ a$ such that  $A' \cong M_n(\mathbb{F})$.	V.~Kurochkin \cite{Kurochkin} noticed that the normalized rank
			\[
			\overline{r}(a) = \frac{r(a)}{n}
			\]
			does not depend on the choice of a subalgebra $A'$. 
			
	Suppose now that $\mathbb{F}$ is  either the field of real numbers or the field of complex numbers. As above, let $A$ be a unital locally matrix algebra.  	For an element $a \in A$, choose a subalgebra $A' \subset A$ containing  $1_A$ and $ a$ such that  $A' \cong M_n(\mathbb{F})$.
Then the \textbf{normalized determinant} is defined by
\[
\overline{\det}(a) = |\det(a)|^{\frac{1}{n}},
\]
and does not depend on the choice of the subalgebra $A'$; see \cite{Bezushchak}.

Thus, unital locally matrix algebras are equipped with well-defined normalized rank
and, assuming $\mathbb{F}=\mathbb{R}$ or $\mathbb{F}=\mathbb{C}$, the normalized determinant functions.

Nonunital locally matrix algebras have been studied in~\cite{Baranov2,Spectra_Bezushchak,Diskme}.
The simplest example of such algebras is the algebra of infinite finitary matrices.

			Suppose that $\dim_{\mathbb{F}} A = \aleph_0$.  
			Let $1 \in M_{n_1}(\mathbb{F}) \subset M_{n_2}(\mathbb{F}) \subset \cdots$  
			be an ascending chain of matrix algebras such that
			\[
			\bigcup_{i \ge 1} M_{n_i}(\mathbb{F}) = A.
			\]
			The transpose mapping of $M_{n_i}(\mathbb{F})$ extends to the transpose mapping on $M_{n_{i+1}}(\mathbb{F})$.  
			Hence the algebra $A$ is equipped with a transpose mapping	$t : A \to A.$

			A linear mapping of associative algebras $\psi : A \to B$ is called an \textbf{antihomomorphism} if
$	\psi(a_1 a_2) = \psi(a_2) \psi(a_1)$	for arbitrary elements $a_1, a_2 \in A$.  
			The transpose is an example of an antihomomorphism.
			
		 \begin{theorem}\label{Th__1} 
			Let $A$ and $ B$ be unital locally matrix algebras over a field $\mathbb{F}$ whose characteristic is not $2$.  
			Let $\varphi : A \to B$ be a linear mapping that preserves the normalized rank.  
			Then there exist orthogonal idempotents $e_1, e_2 \in B$, $e_1 + e_2 = 1_B;$  
			a homomorphism $\varphi_1 : A \to e_1 B e_1$, $\varphi_1(1_A) = e_1$;  
			an antihomomorphism $\varphi_2 : A \to e_2 B e_2$, $\varphi_2(1_A) = e_2$;  
			and invertible elements $X, Y \in B$ such that
			\[	\varphi(a) = X \bigl( \varphi_1(a) + \varphi_2(a) \bigr) Y.
			\]  \end{theorem}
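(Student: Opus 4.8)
The plan is to reduce the infinite-dimensional statement to a finite-dimensional rank computation on each matrix subalgebra, to recognize the normalized-rank preserver as a Jordan homomorphism up to an invertible factor, and then to invoke the structure theory of Jordan homomorphisms. First I would normalize. Since $\overline{r}(1_A)=1$, the image $c:=\varphi(1_A)$ has normalized rank $1$; choosing a matrix subalgebra $M_m\cong B_\mu\subseteq B$ with $1_B,c\in B_\mu$ shows that $c$ has full rank $m$ in $B_\mu$, hence $c$ is invertible in $B$. I then replace $\varphi$ by $\theta:=c^{-1}\varphi$, which still preserves the normalized rank because left multiplication by the invertible element $c^{-1}$ does not change the rank inside any matrix subalgebra, and which satisfies $\theta(1_A)=1_B$. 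It therefore suffices to prove that $\theta$ is a Jordan homomorphism; setting $X=c$, $Y=1_B$ and splitting $\theta$ will then give the theorem.

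For the local analysis I fix $a\in A$ and a matrix subalgebra $A_\lambda\cong M_n(\mathbb{F})$ with $1_A,a\in A_\lambda$, and choose $B_\mu\cong M_m(\mathbb{F})$ containing $1_B$ and the finite-dimensional space $\theta(A_\lambda)$. For every $x\in A_\lambda$ the normalized-rank identity reads $r_{B_\mu}(\theta(x))/m=r_{A_\lambda}(x)/n$. Testing this on a rank-one idempotent forces $n\mid m$, say $m=kn$ with $k\ge 1$, and then $r(\theta(x))=k\,r(x)$ for all $x\in A_\lambda$: the restriction $\theta|_{A_\lambda}\colon M_n\to M_{kn}$ is unital and multiplies rank by the fixed integer $k$.

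The crux is that such a map preserves idempotents. Indeed, if $e\in M_n$ is an idempotent, put $p=\theta(e)$ and $q=\theta(1-e)=I_m-p$; then $r(p)+r(q)=k\,r(e)+k\,r(1-e)=kn=m$, while $p+q=I_m$ gives $\mathbb{F}^m=\mathrm{im}\,p+\mathrm{im}\,q$. The rank count makes this sum direct, and the relation $p+q=I_m$ then forces $p$ to act as the projection onto $\mathrm{im}\,p$ along $\mathrm{im}\,q$, so $p^2=p$. Thus $\theta$ carries idempotents to idempotents. A short polarization finishes the local step: for pairwise orthogonal idempotents $e_i$ with $\sum e_i=1$, expanding $\theta(\sum_{i\in S}e_i)^2=\theta(\sum_{i\in S}e_i)$ over all subsets $S$ yields $\theta(e_i)^2=\theta(e_i)$ and $\theta(e_i)\theta(e_j)+\theta(e_j)\theta(e_i)=0$ for $i\ne j$, whence $\theta(x^2)=\theta(x)^2$ for every diagonalizable $x$. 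Since $\mathrm{char}\,\mathbb{F}\ne 2$ and the Jordan identity is polynomial, passing to the algebraic closure (rank is unaffected by scalar extension) and using the density of diagonalizable matrices gives $\theta(x^2)=\theta(x)^2$ for all $x\in A_\lambda$. As $a$ was arbitrary, $\theta$ is a Jordan homomorphism on all of $A$.

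Finally, a unital locally matrix algebra is simple, since a nonzero ideal meets some simple subalgebra $M_{n_\lambda}\ni 1_A$ and hence contains $1_A$. Applying the structure theory of Jordan homomorphisms of simple algebras (Herstein, Jacobson--Rickart; see \cite{Bresar_Zelmanov}) to $\theta\colon A\to B$ with $\theta(1_A)=1_B$, I obtain orthogonal idempotents $e_1,e_2\in B$ with $e_1+e_2=1_B$, central in the subalgebra generated by $\theta(A)$, such that $\varphi_1:=\theta(\,\cdot\,)e_1$ is a homomorphism into $e_1Be_1$, $\varphi_2:=\theta(\,\cdot\,)e_2$ is an antihomomorphism into $e_2Be_2$, with $\varphi_i(1_A)=e_i$ and $\theta=\varphi_1+\varphi_2$. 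Then $\varphi(a)=c\,\theta(a)=c\bigl(\varphi_1(a)+\varphi_2(a)\bigr)$, which is the asserted form with $X=c$ and $Y=1_B$. The main obstacle I anticipate is exactly this crux finite-dimensional step, namely extracting Jordan-multiplicativity from the purely numerical rank-$k$-multiplying hypothesis: the direct-sum/projection argument for idempotent preservation is the idea that makes it clean, but upgrading idempotent preservation to the full Jordan identity over an arbitrary field of characteristic $\ne 2$ requires the scalar-extension and density argument, and one must verify carefully that rank-$k$-multiplicativity persists after extending scalars.
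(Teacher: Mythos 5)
Your skeleton is essentially the paper's: normalize so the map is unital (your $\theta=c^{-1}\varphi$; in the paper the invertible factors $X,Y$ absorb $\varphi(1_A)$), prove on each matrix subalgebra that idempotents map to idempotents via the rank count $r(p)+r(I_m-p)=m$ (this is exactly the content of Lemma~\ref{Lem__1}), conclude that the unital map is a Jordan homomorphism, and finish with Jacobson--Rickart \cite{Jacobson2}. Your normalization and the observation $n\mid m$, $r(\theta(x))=k\,r(x)$ are correct, and are in fact spelled out more carefully than in the paper, whose proof applies Lemma~\ref{Lem__1} without explicitly reducing to the unital case.

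The genuine gap is exactly where you anticipated it: upgrading idempotent preservation to the Jordan identity. Your polarization over families of \emph{pairwise orthogonal} idempotents only gives $\theta(x^2)=\theta(x)^2$ for $x$ diagonalizable over $\mathbb{F}$, and the passage to all of $M_n(\mathbb{F})$ fails. First, the theorem allows any field of characteristic $\ne 2$, including finite fields such as $\mathbb{F}_3$; there $M_n(\mathbb{F})$ is a finite set, the $\mathbb{F}$-diagonalizable matrices form a sparse subset of it, and no density argument of any kind is available. Second, your proposed repair---extending scalars to $\overline{\mathbb{F}}$---cannot be carried out: it would require knowing that $\theta\otimes\operatorname{id}_{\overline{\mathbb{F}}}$ still multiplies rank by $k$ (equivalently, still preserves idempotents), but rank preservation is not a polynomial identity and does not pass to a base change, and idempotent preservation is not a linear condition, so nothing is known about the extended map on the new idempotents of $M_n(\overline{\mathbb{F}})$. (Over an \emph{infinite} field one can avoid extending $\theta$ at all, since $\mathbb{F}$-diagonalizable matrices are Zariski-dense and the identity $\theta(x^2)-\theta(x)^2=0$ is polynomial in the entries of $x$; but that still leaves the finite-field case uncovered.) What is missing is a purely algebraic argument that also exploits \emph{non-commuting} idempotents such as $e_{ii}+e_{ij}$, $i\ne j$: from their preservation one derives the bilinear Jordan relations on all pairs of matrix units and concludes by linearity, over any field of characteristic $\ne 2$. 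This is precisely the Bre\v{s}ar--\v{S}emrl theorem \cite{Bresar_00} that Lemma~\ref{Lem__1} invokes; citing it, or reproducing its matrix-unit computation, closes the gap. A smaller inaccuracy: in the final step you justify Jacobson--Rickart by the simplicity of $A$, but their decomposition theorem is for Jordan homomorphisms of $M_n(R)$, $n\ge 2$, into an arbitrary ring; the correct justification, as in the paper, is that $A\cong M_n\bigl(C_A(A')\bigr)$ for the centralizer $C_A(A')$ of a matrix subalgebra $A'\cong M_n(\mathbb{F})$ with $n\ge 2$.
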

			
 \begin{remark}  
			For countable-dimensional unital locally matrix algebras, their unital endomorphisms were described in~\cite{14}. \end{remark}
			
 \begin{corollary} \label{Cor__1} 
			If a surjective linear mapping $\varphi : A \to B$ preserves the normalized rank,  
			then there exist an isomorphism or an antiisomorphism $	\psi : A \to B$			and invertible elements $X, Y \in B$ such that 	$\varphi(a) = X \, \psi(a) \, Y, $ $ a \in A.$ \end{corollary}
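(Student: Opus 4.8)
The plan is to read the corollary off Theorem~\ref{Th__1}. Applying that theorem to the surjective rank-preserving $\varphi$, I obtain orthogonal idempotents $e_1,e_2\in B$ with $e_1+e_2=1_B$, a homomorphism $\varphi_1\colon A\to e_1Be_1$, an antihomomorphism $\varphi_2\colon A\to e_2Be_2$, and invertible elements $X,Y\in B$ with $\varphi(a)=X\bigl(\varphi_1(a)+\varphi_2(a)\bigr)Y$. Set $\psi:=\varphi_1+\varphi_2$. Since $X,Y$ are invertible, $\varphi$ is surjective if and only if $\psi$ is, so the whole task reduces to showing that surjectivity forces one of $e_1,e_2$ to vanish and that the surviving component is bijective.

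First I would record the structural fact that every unital locally matrix algebra is simple: a nonzero ideal $J$ contains a nonzero element $j$, which lies in a unital matrix subalgebra $A'\cong M_n(\mathbb{F})$ with $1_A\in A'$; since $A'$ is simple, $1_A=1_{A'}\in A'jA'\subseteq J$, whence $J=A$. In particular both $A$ and $B$ are simple, and this is the fact I will lean on twice.

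Next comes the Peirce-decomposition step. Because $\varphi_1(a)\in e_1Be_1$ and $\varphi_2(a)\in e_2Be_2$, the image of $\psi$ is contained in $e_1Be_1\oplus e_2Be_2$. I claim that if $e_1$ and $e_2$ were both nonzero this would already contradict surjectivity. Indeed, the ideal $Be_2B$ is nonzero, so by simplicity $Be_2B=B$, and hence $e_1B=e_1(Be_2B)=(e_1Be_2)B$; were $e_1Be_2$ zero, we would get $e_1B=0$ and so $e_1=e_1\cdot 1_B=0$, a contradiction. Thus $e_1Be_2\neq0$, so $e_1Be_1\oplus e_2Be_2$ is a proper subspace of $B$ and $\psi$ cannot be onto. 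Therefore one of the idempotents must be $0$.

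Finally, suppose $e_2=0$, the case $e_1=0$ being symmetric and producing an antiisomorphism. Then $\varphi_2=0$ and $\psi=\varphi_1$ is a homomorphism with $\psi(1_A)=1_B$. From $\psi(a)=X^{-1}\varphi(a)Y^{-1}$ and the surjectivity of $\varphi$ it follows that $\psi$ is surjective, while $\ker\psi$ is an ideal of $A$ not containing $1_A$, hence $\ker\psi=0$ by simplicity of $A$; so $\psi$ is an isomorphism, and $\varphi(a)=X\psi(a)Y$ as required. I expect the only substantive point to be the Peirce/simplicity argument excluding the possibility that both idempotents are nonzero; the remaining implications are formal.
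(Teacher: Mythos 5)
Your proof is correct and takes essentially the same route as the paper: apply Theorem~\ref{Th__1} and use simplicity of $B$, via the Peirce decomposition, to rule out the possibility that both idempotents $e_1,e_2$ are nonzero. Your write-up is in fact more complete than the paper's two-line argument, which leaves implicit the simplicity of locally matrix algebras, the reason $e_1Be_2\neq 0$, the reduction through the invertible factors $X,Y$, and the final check that the surviving component $\psi$ is bijective.
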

		
		It turns out that the description of normalized rank-preserving linear mappings 
		$A \to A$ depends on the Steinitz number   $\mathbf{st}(A)$.  
		
A Steinitz number   (\ref{Eq__1})  is called \textbf{locally finite} if $k_p < \infty$ for all prime numbers $p$.
		
	 \begin{corollary} \label{Cor__2} 
		If the Steinitz number $\mathbf{st}(A)$ is locally finite, 
		then every normalized rank-preserving linear mapping $ A \to A$ 
		is either a homomorphism or an antihomomorphism.  
		If $\mathbf{st}(A)$ is not locally finite and $\dim_{\mathbb{F}} A = \aleph_0$, 
		then there exists a normalized rank-preserving linear mapping $ A \to A$ 
		that is neither a homomorphism nor an antihomomorphism. \end{corollary}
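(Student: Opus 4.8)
The plan is to read both assertions off the canonical form of Theorem~\ref{Th__1}. I would first write $\varphi(a)=X\bigl(\varphi_1(a)+\varphi_2(a)\bigr)Y$ and set $e_1=\varphi_1(1_A)$, $e_2=\varphi_2(1_A)$, two orthogonal idempotents with $e_1+e_2=1_A$. Evaluating at $1_A$ gives $\varphi(1_A)=XY$, so normalization ($\varphi(1_A)=1_A$) forces $Y=X^{-1}$ and $\varphi(a)=X\bigl(\varphi_1(a)+\varphi_2(a)\bigr)X^{-1}$. Then $\varphi$ is a homomorphism as soon as $e_2=0$ (it equals $X\varphi_1(\cdot)X^{-1}$) and an antihomomorphism as soon as $e_1=0$. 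Thus both assertions reduce to deciding when $1_A$ can split into two \emph{nonzero} orthogonal idempotents through which $A$ maps, respectively, homomorphically and antihomomorphically. The tool I would set up is the corner formula
\[
v_p\bigl(\mathbf{st}(eAe)\bigr)=v_p\bigl(\overline{r}(e)\bigr)+v_p\bigl(\mathbf{st}(A)\bigr)\qquad(p\ \text{prime}),
\]
valid for every nonzero idempotent $e\in A$, where $v_p$ denotes the exponent of $p$. I expect to obtain it by choosing $M_n\ni 1_A,e$ and noting $e\,M_m\,e\cong M_{r(e)\,m/n}$ for intermediate $M_m\supseteq M_n$, so that $eAe$ is again a unital locally matrix algebra (cf.~\cite{BezOl}).

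For the first assertion I would use that a unital locally matrix algebra is simple, so each $\varphi_i$ (its kernel being a two-sided ideal) is either $0$ or injective. If $e_1\neq 0$, then $\varphi_1$ is a unital embedding $A\hookrightarrow e_1Ae_1$; since a unital embedding can only enlarge $D(\cdot)$, this gives $\mathbf{st}(A)\mid\mathbf{st}(e_1Ae_1)=\overline{r}(e_1)\,\mathbf{st}(A)$. Local finiteness means every $v_p(\mathbf{st}(A))$ is finite and may be cancelled, leaving $v_p(\overline{r}(e_1))\ge 0$ for all $p$; hence the rational $\overline{r}(e_1)\in(0,1]$ is an integer, i.e.\ $\overline{r}(e_1)=1$ and $e_1=1_A$. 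The same argument applied to $\varphi_2$—now an injective antihomomorphism, equivalently a unital embedding of $A^{\mathrm{op}}$, with $\mathbf{st}(A^{\mathrm{op}})=\mathbf{st}(A)$—gives $e_2=1_A$ whenever $e_2\neq 0$. Since $e_1=e_2=1_A$ is impossible, exactly one of $e_1,e_2$ vanishes, and $\varphi$ is a homomorphism or an antihomomorphism.

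For the second assertion I would pick a prime $p$ with $v_p(\mathbf{st}(A))=\infty$. As $p\mid\mathbf{st}(A)$ there is a unital copy of $M_p$ in $A$; let $e_1$ be a rank-one idempotent of it and $e_2=1_A-e_1$, so $\overline{r}(e_1)=1/p$ and $\overline{r}(e_2)=(p-1)/p$, both in $(0,1)$. By the corner formula the only prime at which the factor $\overline{r}(e_i)$ lowers an exponent is $p$, where it stays $\infty$, so $\mathbf{st}(A)\mid\mathbf{st}(e_iAe_i)$ for $i=1,2$. Since $\dim_{\mathbb{F}}A=\aleph_0$, the countable-dimensional classification~\cite{Glimm} yields a unital homomorphism $\varphi_1\colon A\to e_1Ae_1$ and, applied to $A^{\mathrm{op}}$, a unital antihomomorphism $\varphi_2\colon A\to e_2Ae_2$. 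I would then set $\varphi=\varphi_1+\varphi_2$: it is unital and preserves the normalized rank because the images lie in the orthogonal corners, giving $\overline{r}(\varphi(a))=\overline{r}(\varphi_1(a))+\overline{r}(\varphi_2(a))=(\overline{r}(e_1)+\overline{r}(e_2))\,\overline{r}(a)=\overline{r}(a)$. It is neither a homomorphism nor an antihomomorphism: for $a,b$ with $ab\neq ba$ the cross terms vanish by orthogonality, and one computes $\varphi(a)\varphi(b)-\varphi(ab)=\varphi_2(ba)-\varphi_2(ab)\neq 0$ and $\varphi(b)\varphi(a)-\varphi(ab)=\varphi_1(ba)-\varphi_1(ab)\neq 0$ by injectivity of $\varphi_1,\varphi_2$.

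I expect the main obstacle to be the corner computation and its interface with the countable classification: one must verify that $eAe$ is a unital locally matrix algebra and pin down $\mathbf{st}(eAe)$ exactly, and—for the second assertion—convert the divisibility $\mathbf{st}(A)\mid\mathbf{st}(e_iAe_i)$ into genuine unital (anti)homomorphisms. The decisive arithmetic point, that a finite exponent can be cancelled but an infinite one cannot, is precisely what separates the locally finite case from the non–locally finite one.
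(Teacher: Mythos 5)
Your first assertion is argued exactly as in the paper: simplicity of $A$ makes $\varphi_1,\varphi_2$ injective unital embeddings into the corners, the corner formula $\mathbf{st}(e_iAe_i)=\overline{r}(e_i)\,\mathbf{st}(A)$ (which the paper quotes from \cite{Bezushchak_Oliynyk_1} rather than reproving) gives the divisibility $\mathbf{st}(A)\mid\overline{r}(e_i)\,\mathbf{st}(A)$, and cancelling the finite exponents forces $\overline{r}(e_i)=1$. For the second assertion your plan is the same in spirit as the paper's --- split $1_A$ inside a unital copy of $M_p(\mathbb{F})$ for a prime $p$ of infinite exponent, and map $A$ homomorphically into one corner and antihomomorphically into the other --- but there is a genuine gap at the step ``the countable-dimensional classification yields \dots\ a unital antihomomorphism $\varphi_2\colon A\to e_2Ae_2$.'' Glimm's theorem, as stated and used in the paper, converts \emph{equality} of Steinitz numbers into an isomorphism; it does not convert divisibility into an embedding. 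For your rank-one corner this is harmless, since $\mathbf{st}(e_1Ae_1)=\tfrac1p\,\mathbf{st}(A)=\mathbf{st}(A)$ exactly; but $\mathbf{st}(e_2Ae_2)=\tfrac{p-1}{p}\,\mathbf{st}(A)$ is in general strictly larger (take $\mathbf{st}(A)=3^{\infty}$, $p=3$: then $\mathbf{st}(e_2Ae_2)=2\cdot 3^{\infty}\neq 3^{\infty}$), so no isomorphism $A^{\mathrm{op}}\to e_2Ae_2$ exists and you need a separate embedding theorem --- you flag this yourself as ``the main obstacle'' but never supply it. The paper closes exactly this hole by an explicit construction instead of an abstract one: it writes $A\cong M_p(A')$ with $A'=C_A(M_p(\mathbb{F}))$, notes $\mathbf{st}(A')=\mathbf{st}(A)$ and hence $A'\cong A$ via some isomorphism $\psi$ (here Glimm's isomorphism theorem \emph{does} apply), and defines $a\mapsto\operatorname{diag}(\psi(a),\ldots,\psi(a),\psi(a^t))$; the diagonal repetition of $\psi(a)$ is precisely the unital embedding of $A$ into the rank-$(p-1)$ corner that your argument is missing. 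Substituting this diagonal map for your appeal to the classification makes your proof complete; your verification that the resulting map preserves normalized rank and is neither a homomorphism nor an antihomomorphism is correct and in fact more detailed than the paper's.

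One further caution: your opening reduction reads ``normalized'' as the hypothesis $\varphi(1_A)=1_A$, but in the statement ``normalized'' modifies ``rank'' (the function $\overline{r}$), not the mapping, so the step ``normalization forces $Y=X^{-1}$'' is not licensed by the corollary as written. Admittedly, without some such convention the first assertion is not literally true --- $a\mapsto XaY$ with $YX\neq 1_A$ preserves normalized rank and is neither a homomorphism nor an antihomomorphism --- and the paper's own proof silently discards the factors $X,Y$ from Theorem~\ref{Th__1} in just the same way; so this is a defect of the statement shared with the paper rather than an error original to you, but it should be stated as an assumption (or a reformulation of the conclusion), not smuggled in through the word ``normalized.''
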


	\begin{theorem}\label{Th__2}
		Let $A, B$ be unital locally matrix algebras over a field 
		$\mathbb{F} = \mathbb{R}$ or $\mathbb{C}$.  
		Let $\varphi \colon A \to B$ be a surjective linear mapping that preserves 
		the normalized determinant.  
		Then $\varphi$ is of one of the following types:
		\[
		\varphi(a) = X \, \varphi_1(a) \, Y, \quad a \in A,
		\]
		where $\varphi_1 \colon A \to B$ is an isomorphism, or
		\[
		\varphi(a) = X \, \varphi_2(a) \, Y, \quad a \in A,
		\]
		where $\varphi_2 \colon A \to B$ is an anti\-isomorphism;  
		here $X, Y \in B$ are invertible elements with $\overline{\det}(X Y) = 1$. 	\end{theorem}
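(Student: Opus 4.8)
The plan is to deduce Theorem~\ref{Th__2} from the rank-preserving description in Corollary~\ref{Cor__1}. The bridge is the observation that, over $\mathbb{R}$ or $\mathbb{C}$, the normalized rank can be read off from the normalized determinant of suitable one-parameter perturbations. Concretely, I would first record the elementary identity $r(a)=\max_{b}\deg_\lambda\det(b+\lambda a)$ in $M_n(\mathbb{F})$, proved by expanding $\det(b+\lambda a)$ by multilinearity in the columns: the coefficient of $\lambda^{k}$ is a sum of determinants of matrices whose $a$-columns are indexed by a $k$-set, so it vanishes once $k$ exceeds the column rank of $a$, while for $k=r(a)$ one choice of $b$ (zero on a maximal independent set of $a$-columns, completing to a basis off it) leaves exactly one nonzero term. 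Dividing by $n$ and checking that passing to a larger matrix subalgebra $M_n\subset M_{n'}$, where ranks scale by $n'/n$, leaves the quotient unchanged, I obtain the subalgebra-independent formula
\[
\overline{r}(a)=\sup_{b\in A}\ \lim_{|\lambda|\to\infty}\frac{\log\overline{\det}(b+\lambda a)}{\log|\lambda|},
\]
valid in any unital locally matrix algebra over $\mathbb{R}$ or $\mathbb{C}$, since $\overline{\det}(b+\lambda a)=|\det(b+\lambda a)|^{1/n}$ grows like $|\lambda|^{\deg_\lambda\det(b+\lambda a)/n}$ whenever $\det(b+\lambda a)\not\equiv 0$.

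Granting this formula, the key step is immediate. For any $a,b\in A$ linearity gives $\varphi(b+\lambda a)=\varphi(b)+\lambda\varphi(a)$, and preservation of the normalized determinant yields $\overline{\det}(b+\lambda a)=\overline{\det}(\varphi(b)+\lambda\varphi(a))$ for every scalar $\lambda$. Hence the two functions of $\lambda$ have the same growth exponent at infinity, so taking the supremum over $b\in A$ of the left-hand exponents produces $\overline{r}(a)$, while on the right it produces $\sup_{b\in A}$ of the exponents of $\overline{\det}(\varphi(b)+\lambda\varphi(a))$. This is the one place surjectivity enters: as $b$ runs over $A$, the element $\varphi(b)$ runs over all of $B$, so the right-hand supremum is exactly the same formula computed in $B$ for $\varphi(a)$, namely $\overline{r}(\varphi(a))$. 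Therefore $\overline{r}(\varphi(a))=\overline{r}(a)$ for all $a\in A$, that is, $\varphi$ is a surjective normalized rank-preserving linear map.

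Now Corollary~\ref{Cor__1} applies and produces an isomorphism or an antiisomorphism $\psi\colon A\to B$ together with invertible $X,Y\in B$ such that $\varphi(a)=X\,\psi(a)\,Y$; taking $\varphi_1=\psi$ in the first case and $\varphi_2=\psi$ in the second gives the two stated forms. It remains to verify $\overline{\det}(XY)=1$. Here I would use that the normalized determinant is multiplicative, $\overline{\det}(pq)=\overline{\det}(p)\,\overline{\det}(q)$, which in a common matrix subalgebra reduces to $|\det(pq)|^{1/n}=|\det p|^{1/n}\,|\det q|^{1/n}$, together with the fact that both isomorphisms and antiisomorphisms preserve the normalized determinant: on each matching pair of matrix subalgebras an isomorphism restricts to an inner map $x\mapsto uxu^{-1}$ and an antiisomorphism to $x\mapsto u x^{t} u^{-1}$, and neither changes $\det$. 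Consequently $\overline{\det}(a)=\overline{\det}(\varphi(a))=\overline{\det}(X)\,\overline{\det}(\psi(a))\,\overline{\det}(Y)=\overline{\det}(XY)\,\overline{\det}(a)$ for all $a$; evaluating at $a=1_A$, where $\overline{\det}(1_A)=1$, forces $\overline{\det}(XY)=1$.

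The technical heart, and the step I expect to require the most care, is the first paragraph: proving the perturbation formula for $\overline{r}$ and, above all, checking its independence of the ambient matrix subalgebra, so that the supremum over $b\in A$ is genuinely intrinsic and transports correctly under $\varphi$. Once this is in place the argument is short, the only other essential input being surjectivity, which upgrades the inequality $\overline{r}(a)\le\overline{r}(\varphi(a))$, valid without any hypothesis, to an equality. The remaining verifications, namely multiplicativity of $\overline{\det}$ and its invariance under (anti)isomorphisms, are routine, and the condition on $XY$ then drops out by evaluating at the identity.
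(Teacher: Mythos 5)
Your proposal is correct and takes essentially the same route as the paper: the same degree-of-$\det(b+\lambda a)$ characterization of rank (the paper's Lemma~\ref{Lem__2}), transported through $\varphi$ by linearity and determinant-preservation to conclude that $\varphi$ preserves the normalized rank, followed by Corollary~\ref{Cor__1} and evaluation at $1_A$ to get $\overline{\det}(XY)=1$. The only cosmetic differences are that the paper first proves the one-sided inequality $\overline{r}(a)\le\overline{r}(\varphi(a))$ for an arbitrary determinant-preserving map (its Lemma~\ref{Lem__3}), deduces bijectivity, and applies the same argument to $\varphi^{-1}$, whereas you fold surjectivity directly into the supremum defining the growth exponent; and your closing verification via multiplicativity of $\overline{\det}$ is heavier than needed, since $\varphi(1_A)=XY$ already gives $\overline{\det}(XY)=\overline{\det}(1_A)=1$.
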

		
		\section{Rank-preserving mappings}

		An element $e$ of an associative algebra is called an \textbf{idempotent} 
		if $e^2 = e$.

	\begin{lemma}\label{Lem__1} 
	Let $\varphi \colon M_n(\mathbb{F}) \to M_m(\mathbb{F})$   be a linear
	mapping that preserves the normalized rank and maps the identity matrix $I_n$
	to the identity matrix $I_m$. Then $\varphi$ is a Jordan homomorphism.  \end{lemma}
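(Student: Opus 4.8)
The plan is to reduce the statement to the classical fact that a unital linear idempotent-preserving map is a Jordan homomorphism, the bridge being a rank characterization of idempotents. First I would normalize: since $\varphi$ preserves the normalized rank, $\tfrac{r(\varphi(a))}{m}=\tfrac{r(a)}{n}$, so that $r(\varphi(a))=\tfrac{m}{n}\,r(a)$ for every $a\in M_n(\mathbb{F})$. Choosing $a$ of rank $1$ forces $n\mid m$; writing $k=m/n$ we obtain the clean scaling $r(\varphi(a))=k\,r(a)$ for all $a$, and in particular $r(\varphi(I_n))=kn=m$, consistent with $\varphi(I_n)=I_m$.

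The key observation is that idempotents admit a purely rank-theoretic description: an element $b\in M_m(\mathbb{F})$ is idempotent if and only if $r(b)+r(I_m-b)=m$. Indeed, subadditivity gives $r(b)+r(I_m-b)\ge r(I_m)=m$, with equality exactly when $\mathrm{Im}(b)\oplus\mathrm{Im}(I_m-b)=\mathbb{F}^m$; and when this holds, the identity $x=bx+(I_m-b)x$ exhibits $b$ as the projection onto $\mathrm{Im}(b)$ along $\mathrm{Im}(I_m-b)$, forcing $b^2=b$. I would then apply this to $\varphi$: for an idempotent $e\in M_n(\mathbb{F})$ one has $r(e)+r(I_n-e)=n$, so using the scaling and $\varphi(I_n-e)=I_m-\varphi(e)$ (by linearity and $\varphi(I_n)=I_m$) we get
\[
r(\varphi(e))+r(I_m-\varphi(e))=k\,r(e)+k\,r(I_n-e)=kn=m,
\]
whence $\varphi(e)$ is idempotent. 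Thus $\varphi$ maps idempotents to idempotents.

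It then remains to show that a unital idempotent-preserving linear map is a Jordan homomorphism; it suffices to establish, for every idempotent $e$ and every $a\in M_n(\mathbb{F})$, the polarized identity $\varphi(ea+ae)=\varphi(e)\varphi(a)+\varphi(a)\varphi(e)$, since $M_n(\mathbb{F})$ is linearly spanned by idempotents (e.g. $E_{ij}=(E_{ii}+E_{ij})-E_{ii}$) and, when $\mathrm{char}\,\mathbb{F}\neq 2$, this identity extends bilinearly to the Jordan relation $\varphi(a^2)=\varphi(a)^2$. The mechanism is the off-diagonal idempotent trick: for an idempotent $e$ and any scalar $t$, the element $e+t\,ea(I_n-e)$ is again idempotent, so with $u=\varphi(e)$ and $w=\varphi(ea(I_n-e))$ the idempotency of $u+tw$ for all $t$ yields (comparing powers of $t$) the relations $uw+wu=w$ and $w^2=0$, and symmetrically for $v=\varphi((I_n-e)ae)$. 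Writing $a$ in its Peirce decomposition relative to $e$, these relations dispatch the two off-diagonal blocks directly. The main obstacle is the diagonal (corner) part: one must control $u\circ\varphi(eae)$ and $u\circ\varphi((I_n-e)a(I_n-e))$, which I would handle by applying the same idempotent trick to the complementary idempotent $I_n-e$ together with the standard corner-idempotent argument, the point at which the hypothesis $\mathrm{char}\,\mathbb{F}\neq 2$ is genuinely used. The essentially new and elegant ingredient is the rank characterization of idempotents; the passage from idempotent-preservation to the full Jordan identity is the technical heart and follows the classical preserver arguments already invoked in the introduction for maps $M_n(\mathbb{F})\to M_n(\mathbb{F})$.
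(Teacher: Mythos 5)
Your proposal is correct, and its decisive step coincides exactly with the paper's: the rank characterization of idempotents ($b$ is idempotent iff $r(b)+r(I-b)$ equals the size of the matrix), which together with unitality shows that $\varphi$ carries idempotents to idempotents. Where you diverge is in the second step: the paper simply invokes the theorem of Bre\v{s}ar and \v{S}emrl that a linear map from $M_n(\mathbb{F})$ into \emph{any} associative $\mathbb{F}$-algebra sending idempotents to idempotents is a Jordan homomorphism, whereas you sketch a proof of that result via the Peirce decomposition and the off-diagonal idempotent trick ($e+t\,ea(I_n-e)$ idempotent for all $t$). Your sketch is sound but leaves the diagonal blocks to a ``standard corner-idempotent argument''; to make it airtight you would need to spell this out, e.g.\ that idempotents of the corner algebra $eM_n(\mathbb{F})e$ span it, that for an idempotent $f\le e$ the idempotency of $\varphi(f)$ and $\varphi(e-f)$ yields $u\varphi(f)+\varphi(f)u=2\varphi(f)$, and that for $g\le I_n-e$ the idempotency of $\varphi(e+g)$ yields $u\varphi(g)+\varphi(g)u=0$; these give $u\circ\varphi(eae)=2\varphi(eae)$ and $u\circ\varphi((I_n-e)a(I_n-e))=0$, which is precisely what the Peirce bookkeeping requires. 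The trade-off is clear: the paper's citation is shorter and applies to maps into arbitrary associative algebras (which is what its proof of Theorem~\ref{Th__1} actually needs later), while your route makes the lemma self-contained at the cost of reproducing a known argument. Your preliminary observation that $n\mid m$ and $r(\varphi(a))=\tfrac{m}{n}r(a)$ is correct but not needed; the normalized-rank identity $\overline{r}(\varphi(e))+\overline{r}(I_m-\varphi(e))=1$ suffices as stated in the paper.
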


	\begin{proof} It is well known that a linear transformation $A$ on an $n$-dimensional space  $V$ is idempotent if and only if $V = \operatorname{Ker}(A) \oplus  \operatorname{Im}(A).$ In matrix terms, this means that a matrix $a\in M_n(\mathbb{F}) $ is idempotent if and only if
	\[
	r(a)+r(I_n-a)=n,
	\]
	or, equivalently,
	\[
	\overline{r}(a)+\overline{r}(I_n-a)=1.
	\]
	This implies that the image $\varphi(a)$ of an idempotent $a\in M_n(\mathbb{F})$
	is an idempotent in $M_m(\mathbb{F})$.

	M.~Bre\v{s}ar and P.~\v{S}emrl~\cite{Bresar_00} proved that if a linear mapping from
	$M_n(\mathbb{F})$ to an arbitrary associative $\mathbb{F}$-algebra maps
	idempotents to idempotents, then it is a Jordan homomorphism.
	This completes the proof of the lemma.  \end{proof}

\begin{proof}[Proof of Theorem~\ref{Th__1}] 		Let $A' \subset A$ be a subalgebra of $A$ such that $1_A \in A'$ and $A' \cong M_n(\mathbb{F})$.
		Choose a subalgebra $B' \subset B$ such that $\varphi(A') \subseteq B'$ and $B' \cong M_m(\mathbb{F})$.
		By Lemma~\ref{Lem__1}, the restriction $\varphi|_{A'} \colon A' \to B'$ 
		is a Jordan homomorphism. This implies that 
		$\varphi \colon A \to B$ is a Jordan homomorphism.

For an arbitrary integer $n \in D(A)$, choose a subalgebra $A' \subset A$ containing  $1_A$ such that  $A' \cong M_n(\mathbb{F})$.	Let $C_A(A')$ be the centralizer of $A'$ in the algebra $A$. It is well known (see~\cite{Drozd_Kirichenko}) that $A$ is isomorphic
to the tensor product $A'\otimes_{\mathbb{F}} C_A(A')$; hence
\[
A\cong M_n\!\bigl(C_A(A')\bigr).
\]	Now Theorem~\ref{Th__1} follows immediately  from the result of N.~Jacobson and C.~Rickart~\cite{Jacobson2}
		on Jordan homomorphisms of matrix algebras.
		This completes the proof of Theorem~\ref{Th__1}. \end{proof}

		\begin{proof}[Proof of Corollary~\ref{Cor__1}] 	Suppose that a surjective linear mapping $\varphi \colon A \to B$ preserves the normalized rank.
		If the idempotents $e_1, e_2 \in A$ (see Theorem~\ref{Th__1}) are both nonzero, then
		\[
		\varphi(A) \subseteq e_1 B e_1 + e_2 B e_2, \quad \varphi(A) = B.
		\]
		But $B \not= e_1 B e_1 \oplus e_2 B e_2$, since the algebra $B$ is simple.
		This contradiction proves the corollary. \end{proof}
		
		\begin{proof}[Proof of Corollary~\ref{Cor__2}]  	Suppose that the Steinitz number $s = \mathbf{st}(A)$ is locally finite, and that $e_1, e_2 \in A$ are nonzero orthogonal idempotents satisfying $e_1 + e_2 = 1_A$.
			Let $\varphi_1 \colon A \to e_1 A e_1$ be a homomorphism with $\varphi_1(1_A) = e_1$, and let $\varphi_2 \colon A \to e_2 A e_2$ be an antihomomorphism with $\varphi_2(1_A) = e_2$.

			If $1_{A} \in A' \subset A$ is a locally matrix subalgebra of $A$, then $\mathbf{st}(A')$ divides $\mathbf{st}(A)$. 	Hence,	$s \mid \mathbf{st}(e_1 A e_1)$ and $ s \mid \mathbf{st}(e_2 A e_2).$		Let 
		\[
		s = \prod_{p \in \mathbb{P}} p^{k_p}, \quad k_p < \infty.
		\]
		There exists a subalgebra $M_n(\mathbb{F}) \subset A$ such that 
		$e_1, e_2 \in M_n(\mathbb{F})$, and
		\[
		n = p_1^{k_1} p_2^{k_2} \cdots p_t^{k_t}.
		\]
		Let $r_i$ be the rank of the idempotent $e_i$ in the matrix algebra $M_n(\mathbb{F})$, $i = 1, 2$.  
		In~\cite{Bezushchak_Oliynyk_1} we showed that 
		\[
		\mathbf{st}(e_i A e_i) = \frac{r_i}{n}\, s.
		\]
		Since $\varphi_1 \colon A \to e_1 A e_1$ is a unital embedding, 
		it follows that $s \mid \tfrac{r_1}{n} s$.  
		This is possible only if $r_1 = n$, a contradiction.  
		We have proved that either $e_1 = 1_A$ or $e_2 = 1_A$.  
		In the first case, $\varphi$ is a homomorphism;  
		in the second case, $\varphi$ is an antihomomorphism. 
		
		\begin{remark}
	It is straightforward to see that  every unital (anti)homomorphism 
		$\varphi \colon A \to A$ preserves the normalized rank.  
		A.~Kurosh~\cite{Kurosh} proved that every countable-dimensional unital locally matrix algebra 
		has a unital endomorphism that is not surjective.  
		For another proof and a description of such endomorphisms, see~\cite{14}. 	\end{remark}

		Suppose now that the algebra $A$ is countable-dimensional and $\mathbf{st}(A) = p^{\infty} \cdot s'$.  
		Choose a subalgebra $1 \in M_p(\mathbb{F}) \subset A$.  
		Then $A \cong M_p(A')$, where $A'$ is the centralizer of the subalgebra $M_p(\mathbb{F})$.
		We have $\mathbf{st}(A) = p \cdot \mathbf{st}(A')$, hence $\mathbf{st}(A)  = \mathbf{st}(A') $. By J.~Glimm’s theorem, $A \cong A'$.  	Let $\psi \colon A \to A'$ be an isomorphism.

		We have shown above that every countable-dimensional unital locally matrix algebra $A$ 	has an antiautomorphism $a \mapsto a^t$, $a \in A$.  	The unital homomorphism
		\[
		A \to A, \quad a \to \operatorname{diag}\bigl(	\underbrace{\psi(a), \ldots, \psi(a)}_{p-1}, \psi(a^t)\bigr)
		\]
		is neither a homomorphism nor an antihomomorphism.  
		This completes the proof of Corollary~\ref{Cor__2}. \end{proof}

		\section{Determinant-preserving mappings: proof of Theorem \ref{Th__2}}

		In this section, we assume that $\mathbb{F}$ is the field of real  	or complex numbers.  
		Let $\deg f(t)$ denote the degree of a polynomial $f(t) \in \mathbb{F}[t]$.

		\begin{lemma}\label{Lem__2}
		Let $a \in M_n(\mathbb{F})$ be an $n\times n$  matrix.
		Then
		\[
		r = r(a) = \max \{ \deg \det (ta + b) \mid b \in M_n(\mathbb{F}) \}.
		\] \end{lemma}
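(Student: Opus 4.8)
The plan is to establish the equality by proving the two inequalities separately, where the degree is taken with respect to the variable $t$. First I would record that $\det(ta+b)$ is genuinely a polynomial in $t$ of degree at most $n$, and that its coefficients can be read off from the multilinearity of the determinant in the columns. Writing $a_1,\dots,a_n$ and $b_1,\dots,b_n$ for the columns of $a$ and $b$, the $j$-th column of $ta+b$ is $ta_j+b_j$, so expanding column by column gives
\[
\det(ta+b)=\sum_{S\subseteq\{1,\dots,n\}} t^{|S|}\,D_S,
\]
where $D_S$ is the determinant of the matrix whose $j$-th column equals $a_j$ if $j\in S$ and equals $b_j$ otherwise. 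Thus the coefficient of $t^k$ is $\sum_{|S|=k} D_S$.

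For the upper bound $\max\{\deg\det(ta+b)\}\le r$, I would argue that every $D_S$ with $|S|>r$ vanishes: the corresponding matrix contains the $|S|$ columns $\{a_j : j\in S\}$ of $a$, and since these are more than $r=r(a)$ columns of a rank-$r$ matrix, they are linearly dependent, forcing the determinant to be zero. Hence the coefficient of $t^k$ vanishes for every $k>r$ and every choice of $b$, so $\deg\det(ta+b)\le r$ for all $b$.

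For the lower bound I would exhibit a single $b$ for which the degree equals $r$. The key simplification is that left and right multiplication by invertible matrices changes $\det(ta+b)$ only by a nonzero scalar factor independent of $t$: if $P,Q$ are invertible, then $\det\bigl(P(ta+b)Q\bigr)=\det(P)\det(Q)\det(ta+b)$ while $P(ta+b)Q=t\,PaQ+PbQ$, and the map $b\mapsto PbQ$ is a bijection of $M_n(\mathbb{F})$. Hence the maximum in question is unchanged if we replace $a$ by $PaQ$. Choosing $P,Q$ so that $PaQ=\operatorname{diag}(I_r,0)$ and then taking the $b$ with $PbQ=\operatorname{diag}(0,I_{n-r})$ yields $t\,PaQ+PbQ=\operatorname{diag}(tI_r,I_{n-r})$, whose determinant is $t^r$, a polynomial of degree exactly $r$. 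Combining the two bounds gives the asserted equality.

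The computations here are entirely routine; the only point requiring care is the bookkeeping in the multilinear expansion together with the observation that linear dependence among the columns inherited from $a$ already kills $D_S$. I do not expect a genuine obstacle, and I note that the argument in fact works over an arbitrary field, the hypothesis $\mathbb{F}=\mathbb{R}$ or $\mathbb{C}$ being needed only elsewhere in the section.
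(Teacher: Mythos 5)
Your proof is correct and follows essentially the same route as the paper's: both arguments rest on the invariance of $\max_b \deg\det(ta+b)$ under $a \mapsto PaQ$ with $P,Q$ invertible (the paper phrases this as elementary row and column operations) together with the column-multilinear expansion of the determinant. The only difference is organizational: you establish the upper bound $\deg\det(ta+b)\le r$ before normalizing, via the linear-dependence observation, whereas the paper reduces $a$ to $\operatorname{diag}(\lambda_1,\ldots,\lambda_r,0,\ldots,0)$ first and reads off both bounds from the resulting expansion.
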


			\begin{proof}	Both the left-hand side and the right-hand side do not change under elementary operations on rows and columns. Hence, without loss of generality, we can assume that
		\[
		a = \operatorname{diag}(\lambda_1, \ldots, \lambda_r, 0, \ldots, 0),
		\quad \lambda_i \neq 0, \; 1 \le i \le r.
		\]
		For an arbitrary matrix $b = (b_{ij})_{n \times n} \in M_n(\mathbb{F})$, we have
		\[
		\det(ta + b) = t^r \lambda_1 \cdots \lambda_r \cdot 
		\det
		\begin{bmatrix}
			b_{r+1, r+1} & \cdots & b_{r+1, n} \\
			\vdots & \ddots & \vdots \\
			b_{n, r+1} & \cdots & b_{n,n}
		\end{bmatrix}
		+ \sum_{i < r} t^i (\cdots).
		\]
		This implies the assertion of the lemma. 	\end{proof}

		\begin{lemma}\label{Lem__3}		We have 
				\begin{equation}\label{Eq__2} \overline{r}(a) \le \overline{r}(\varphi(a)) 	\end{equation}
					 for an arbitrary element $a \in A$.	\end{lemma}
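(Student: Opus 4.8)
The plan is to convert the hypothesis on normalized determinants, which is a statement about absolute values of determinants, into the rank inequality by means of Lemma~\ref{Lem__2}, which expresses the rank as the maximal degree of a polynomial of the form $\det(ta+b)$. Throughout, $\varphi\colon A\to B$ is the surjective normalized-determinant-preserving mapping of Theorem~\ref{Th__2}.

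Fix $a\in A$. First I would choose a unital subalgebra $A'\cong M_n(\mathbb{F})$ with $1_A,a\in A'$ and set $r=r(a)$, so that $\overline{r}(a)=r/n$. By Lemma~\ref{Lem__2} there is an element $b\in A'$ for which the polynomial $p(t):=\det_{A'}(ta+b)\in\mathbb{F}[t]$ has degree exactly $r$; note $p\neq 0$. Next I would pass to the image side: since $B$ is a unital locally matrix algebra, the finite set $\{1_B,\varphi(a),\varphi(b)\}$ lies in a unital subalgebra $B'\cong M_m(\mathbb{F})$, and I set $q(t):=\det_{B'}(t\varphi(a)+\varphi(b))\in\mathbb{F}[t]$. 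Applying Lemma~\ref{Lem__2} to $\varphi(a)$ inside $B'$ gives $\deg q\le r(\varphi(a))$.

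The key step links $p$ and $q$ through the preservation hypothesis. For every scalar $\lambda\in\mathbb{F}$, linearity of $\varphi$ gives $\varphi(\lambda a+b)=\lambda\varphi(a)+\varphi(b)$, and since $\varphi$ preserves the normalized determinant,
\[
|p(\lambda)|^{1/n}=\overline{\det}(\lambda a+b)=\overline{\det}\bigl(\lambda\varphi(a)+\varphi(b)\bigr)=|q(\lambda)|^{1/m}.
\]
Raising to the power $nm$ yields the pointwise identity $|p(\lambda)|^{m}=|q(\lambda)|^{n}$ for all $\lambda\in\mathbb{F}$. In particular $q\neq 0$, since $p$ does not vanish identically, so $q$ has a well-defined degree $d$ with nonzero leading coefficient.

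It then remains to read off the degrees. Letting $|\lambda|\to\infty$ along the real axis and comparing the leading growth on both sides of $|p(\lambda)|^{m}=|q(\lambda)|^{n}$ forces the exponents to match, $rm=dn$, hence $d=rm/n$. Combining this with $d\le r(\varphi(a))$ gives $r(\varphi(a))\ge rm/n$, that is,
\[
\overline{r}(\varphi(a))=\frac{r(\varphi(a))}{m}\ge\frac{r}{n}=\overline{r}(a),
\]
which is the asserted inequality~(\ref{Eq__2}). I expect the only delicate point to be this final asymptotic comparison: one must be sure that $p$ and $q$ are genuine nonzero polynomials (guaranteed by the choice of $b$ realizing the maximum in Lemma~\ref{Lem__2}) and that equating the growth rates of $|p(\lambda)|^{m}$ and $|q(\lambda)|^{n}$ as $|\lambda|\to\infty$ legitimately equates $rm$ and $dn$; this is valid over both $\mathbb{R}$ and $\mathbb{C}$ because in either case one may let $\lambda$ tend to infinity through real values.
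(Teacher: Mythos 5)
Your proof is correct and takes essentially the same route as the paper's: both hinge on Lemma~\ref{Lem__2}, apply the normalized-determinant hypothesis to the pencil $\lambda a+b$ for every scalar $\lambda$, and deduce the rank inequality by comparing degrees of the resulting polynomials. The only minor divergence is that where the paper asserts the exact polynomial identity~(\ref{Eq__3}), you retain absolute values and equate growth rates as $\lambda\to\infty$ through real values; this is in fact the more careful handling, since the preservation hypothesis literally yields $|\det(\lambda a+a')|^{m}=|\det(\lambda\varphi(a)+\varphi(a'))|^{n}$ rather than the unsigned equality stated in~(\ref{Eq__3}).
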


		\begin{proof} 		For an element $a \in A$, choose a subalgebra $A' < A$ containing  both the identity element $1_A$ and the element $ a$, and such that  $A' \cong M_n(\mathbb{F})$.	There exists a subalgebra $B' \subset B$ such that $\varphi(A') \subseteq B'$ and $B' \cong M_m(\mathbb{F})$.
		We need to show that
		\[
		\frac{r(a)}{n} \le \frac{r(\varphi(a))}{m}.
		\]
		
		First, we note that
		\begin{equation}\label{Eq__3}
		\det(t a + a')^m = \det \bigl( t \varphi(a) + \varphi(a') \bigr)^n
			\end{equation}
		in $\mathbb{F}[t]$ for arbitrary elements $a, a' \in A'$.
		Indeed, both sides of the equality (\ref{Eq__3}) are polynomials and the field $\mathbb{F}$ is infinite. For an arbitrary $\alpha \in \mathbb{F}$, we have
		\[
		\det(\alpha a + a')^m = \det \bigl( \alpha \varphi(a) + \varphi(a') \bigr)^n.
		\]
		By Lemma~\ref{Lem__2}, 	$r(a) = \max \{ \deg \det (t a + a') \mid a' \in A' \}.$	Hence,
		\[
		m \, r(a) = \max \{ \deg \det (t a + a')^m \mid a' \in A' \}.
		\]
		By the above,
		\[
		\det (t a + a')^m = \det \bigl( t \varphi(a) + \varphi(a') \bigr)^n.
		\] 		Hence,
		\[
		m\cdot r(a) 
		= n \cdot \max \{ \deg \det (t \varphi(a) + \varphi(a')) 
		\mid a' \in A \}
		\le \] \[  n \cdot \max \{ \deg \det (t \varphi(a) + b) 
		\mid b \in B' \} 
		= n \cdot  r(\varphi(a)).
		\]
		This completes the proof of the lemma. \end{proof}

		\begin{corollary}\label{Cor__3} 	The mapping $\varphi$ is a bijection. 	\end{corollary}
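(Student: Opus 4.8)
The plan is to observe that surjectivity of $\varphi$ is already built into the hypotheses of Theorem~\ref{Th__2} (and hence is available throughout this section), so the entire content of the corollary reduces to establishing injectivity. Since $\varphi$ is linear, injectivity is equivalent to $\ker\varphi=\{0\}$, and this is what I would aim to prove.

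The key observation I would isolate first is that the normalized rank detects the zero element. For any nonzero $a\in A$, choosing a subalgebra $A'\cong M_n(\mathbb{F})$ with $1_A,a\in A'$, the ordinary rank satisfies $r(a)\ge 1$, so $\overline{r}(a)=r(a)/n>0$; on the other hand $\overline{r}(0)=0$. Thus $\overline{r}(a)=0$ if and only if $a=0$. This elementary remark is exactly what converts the one-sided inequality of Lemma~\ref{Lem__3} into an injectivity statement.

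With this in place the deduction is immediate. Suppose $a\in\ker\varphi$, that is, $\varphi(a)=0$. Applying the inequality (\ref{Eq__2}) of Lemma~\ref{Lem__3} gives
\[
\overline{r}(a)\le \overline{r}(\varphi(a))=\overline{r}(0)=0,
\]
which forces $\overline{r}(a)=0$ and therefore $a=0$. Hence $\ker\varphi=\{0\}$, so $\varphi$ is injective; combined with the assumed surjectivity, $\varphi$ is a bijection.

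I do not anticipate any genuine obstacle in this step: all the substantive work sits in Lemma~\ref{Lem__3}, whose inequality $\overline{r}(a)\le\overline{r}(\varphi(a))$ already encodes the normalized-determinant-preserving hypothesis. The only point demanding (minimal) care is the observation that the normalized rank vanishes precisely on the zero element, and once that is noted the corollary follows in a single line.
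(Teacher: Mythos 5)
Your proof is correct and is essentially the paper's own argument: the paper disposes of the corollary with the single remark that inequality (\ref{Eq__2}) forces $\ker\varphi=\{0\}$, surjectivity being part of the hypotheses of Theorem~\ref{Th__2}. Your write-up merely makes explicit the (correct) auxiliary observation that $\overline{r}(a)=0$ if and only if $a=0$, which the paper leaves implicit.
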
 
		
		Indeed, the inequality (\ref{Eq__2})	implies that $\ker \varphi = \{0\}$.

		The inverse linear mapping $\varphi^{-1}$ also preserves the normalized determinant. Hence
		\[
		\overline{r}(\varphi(a)) \le \overline{r}(a), \quad a \in A,
		\]
		and therefore
		\[
		\overline{r}(a) = \overline{r}(\varphi(a)).
		\]	By Theorem~\ref{Th__1}, there exists an isomorphism or an antiisomorphism
		$\psi \colon A \to B$ and invertible elements $X, Y \in B$
		such that $\varphi(a) = X \psi(a) Y$ for all $a \in A$.
		Choosing $a = 1_A$, we get $X Y  = \varphi(1_A)$,
		which implies $\overline{\det}(X Y) = 1$, 
		and this completes the proof of Theorem~\ref{Th__2}.

		\section*{Acknowledgment}

				\medskip
		
The author thanks Karimbergen Kudaybergenov for drawing attention to this problem and is grateful to Matej Bre\v{s}ar for valuable discussions.

	\end{document}